\xdef\@endgadget#1{{\unskip\nobreak\hfil\penalty50\hskip1em\hbox{}\nobreak
    \hfil#1\parfillskip=0pt\finalhyphendemerits=0\par}}
\def\@qedsymbol{${}_\blacksquare$}
\def\qed{\@endgadget{\@qedsymbol}}
\newtheorem{lemma}{Lemma}[section]
\newtheorem{theorem}[lemma]{Theorem}
\newtheorem{corollary}[lemma]{Corollary}
\newtheorem{example}[lemma]{Example}
\newtheorem{definition}[lemma]{Definition}
\newtheorem{proposition}[lemma]{Proposition}
\newtheorem{remark}[lemma]{Remark}
\newcommand{\Exp}{\mathrm{Exp\,}}
\newcommand{\Ln}{\mathrm{Ln\,}}
\newcommand{\diag}{\mathrm{diag\,}}
\newcommand{\rank}{\mathrm{rank\,}}
\newcommand{\im}{\mathrm{im\,}}
\newcommand{\spa}{\mathrm{span\,}}
\newcommand{\mR}{\mathbb{R}}
\newcommand{\red}{\color{black}}
\def\BibTeX{{\rm B\kern-.05em{\sc i\kern-.025em b}\kern-.08em
    T\kern-.1667em\lower.7ex\hbox{E}\kern-.125emX}}
\title{Complex and detailed balancing of chemical reaction networks revisited}
\author{A.J. van der Schaft\thanks{Arjan van der Schaft is with the Johann Bernoulli Institute for Mathematics and Computer
Science, University of Groningen, PO Box 407, 9700 AK, the Netherlands, +31-50-3633731, 
{\tt\small A.J.van.der.Schaft@rug.nl}}, S. Rao\thanks{Shodhan Rao is with Ghent University Global Campus,
119 Songdomunhwa-ro, Yeonsu-gu,
Incheon, South Korea 406-840, {\tt\small shodhan.rao@ghent.ac.kr}}, B. Jayawardhana
\thanks{Bayu Jayawardhana is with the Engineering and Technology Institute of Groningen, University of Groningen, the
Netherlands, +31-50-3637156, {\tt\small b.jayawardhana@rug.nl}}
}
\begin{document}
\maketitle
\begin{abstract} 
The characterization of the notions of complex and detailed balancing for mass action kinetics chemical reaction networks is revisited from the perspective of algebraic graph theory, in particular Kirchhoff's Matrix Tree theorem for directed weighted graphs. This yields an elucidation of previously obtained results, in particular with respect to the Wegscheider conditions, and a new necessary and sufficient condition for complex balancing, which can be verified constructively.
\end{abstract}
\section{Introduction} \label{sec:intro}
The notion of complex balancing of mass action kinetics chemical reaction networks, generalizing the classical notion of detailed balancing, dates back at least to the origin of chemical reaction network (CRN) theory; see especially \cite{HornJackson, Horn, Feinberg2}. The assumption of existence of a complex-balanced equilibrium has powerful consequences for the dynamical behavior, precluding multi-stability and oscillations. In this note we will revisit the notion of complex balancing by a systematic use of notions and results from algebraic graph theory, in particular the Laplacian matrix and Kirchhoff's Matrix Tree theorem (see also \cite{mirzaev, gunawardena} for other uses of this theorem in chemical reaction dynamics). This will result in a constructive necessary and sufficient condition for complex balancing. Furthermore, motivated by recent work in \cite{dickinson} expanding on \cite{FeinbergWeg}, we will provide a new perspective and results on the Wegscheider conditions for detailed balancing and formal balancing as introduced in \cite{dickinson}.

The structure of this note is as follows. Section 2 gives a brief recap of the basic framework of CRN theory from an algebraic graph theory perspective, based on \cite{vds,rao,vdsIJC}. Section 3 introduces Kirchhoff's Matrix Tree theorem and shows how the application of this theorem leads to an improved, and more directly verifiable, condition for complex balancing as compared to \cite{Horn,Feinberg2}. Section 4 relates Kirchhoff's Matrix Tree theorem to the notion of formal balancing, cf. \cite{dickinson} and \cite{FeinbergWeg}, and shows how this leads to an insightful graph-theoretic proof of the result obtained in \cite{dickinson} that complex balancing together with formal balancing implies detailed balancing and conversely.

\smallskip

\noindent\emph{\bf Notation}:  
The space of $n$-dimensional real vectors consisting of all strictly positive entries is denoted by $\mR_+^{n}$.
%
The mapping
$\Ln : \mathbb{R}_+^n \to \mathbb{R}^n, \quad x \mapsto \Ln x,$ is the elementwise logarithm, and is defined as the mapping whose $i$-th component is given by
$\ln(x_i).$
Similarly, $\Exp : \mathbb{R}^n \to \mathbb{R}_+^n$ is the mapping whose $i$-th component is given by $\exp x_i$. Furthermore, for two vectors $x,y \in \mathbb{R}_+^n$ we let $\frac{x}{y}$ denote the vector in $\mathbb{R}_+^n$ with $i$-th component $\frac{x_i}{y_i}$.
Finally, $\mathds{1}_n$ denotes the $n$-dimensional vector with all entries equal to $1$.

\smallskip
\noindent\emph{\bf Some graph-theoretic notions} \cite{Bollobas}: A directed graph\footnote{Sometimes called a {\it multigraph} since we allow for multiple edges between vertices.} $\mathcal{G}$ with $c$ vertices and $r$ edges is characterized by its $c \times r$ {\it incidence matrix}, denoted by $D$. Each column of $D$ corresponds to an edge of the graph, and contains exactly one element $1$ at the position of the head vertex of this edge and exactly one $-1$ at the position of its tail vertex; all other elements are zero. Clearly, $\mathds{1}_c^TD=0$. The graph is {\it connected} if any vertex can be reached from any other vertex by following a sequence of edges; direction not taking into account. It holds that $\rank D= c- \ell$, where $\ell$ is the number of {\it connected components} of the graph. In particular, $\mathcal{G}$ is connected if and only if $\ker D^T = \spa \mathds{1}_c$. The graph is {\it strongly connected} if any vertex can be reached from any other vertex, following a sequence of {\it directed} edges. A subgraph of $\mathcal{G}$ is a directed graph whose vertex and edge set are subsets of the vertex and edge set of $\mathcal{G}$. A graph is {\it acyclic} (or, does not contain cycles) if and only if $\ker D =0$.
A {\it spanning tree} of a directed graph $\mathcal{G}$ is a connected, acyclic subgraph of $\mathcal{G}$ that contains all vertices of $\mathcal{G}$.

\section{Recall of complex-balanced chemical reaction networks}
In this section, in order to set the stage, we will briefly recall the well-established framework of (isothermal) chemical reaction network (CRN) theory, originating in the work of Horn, Jackson and Feinberg in the 1970s \cite{HornJackson, Feinberg2, Horn, FeinbergHorn1974}.
Consider a chemical reaction network with $m$ chemical species (metabolites) with concentrations $x \in \mathbb{R}^m_+$, among which $r$ chemical reactions take place. The left-hand sides of the chemical reactions are called {\it substrate complexes} and the right-hand sides the {\it product complexes}. 
To each chemical complex (substrate and/or product) of the reaction network one can associate a {\it vertex} of a graph, and to every reaction (from substrate to product complex) a directed edge (with tail vertex the substrate and head vertex the product complex). 
Let $c$ be the total number of complexes involved in the chemical reaction network, then the resulting directed graph $\mathcal{G}$ with $c$ vertices and $r$ edges is called the {\it graph of complexes}\footnote{In the literature sometimes also referred to as {\it reaction graphs}.}, and is defined by its $c \times r$ incidence matrix $D$.
Furthermore we define the $m \times c$ {\it complex composition matrix}\footnote{In \cite{Horn,HornJackson}, it is called {\it complex matrix} and is denoted by $Y$.} $Z$ with non-negative integer elements expressing the composition of the complexes in terms of the chemical species: its $k$-th column expresses the composition of the $k$-th complex. The dynamics of the chemical reaction network takes the well-known form
\begin{equation}
\dot{x} = Sv(x)=ZDv(x), 
\end{equation}
where $v(x)$ is the vector of {\it reaction rates}, and $S=ZD$ the {\it stoichiometric matrix}. 

The most basic way to define $v(x)$ is {\it mass action kinetics}. For example, the mass action kinetics reaction rate of the reaction $X_1 + 2X_2 \to X_3$  is given as $v(x) = kx_1x_2^2$ with $k >0$ a reaction constant. In general, for a single reaction with substrate complex $\mathcal{S}$ specified by its corresponding column $Z_{\mathcal{S}} = \begin{bmatrix} 
Z_{\mathcal{S}1} & \cdots Z_{\mathcal{S}m} \end{bmatrix}^T$ of the complex composition matrix $Z$, the mass action kinetics reaction rate is given by
\[
kx_1^{Z_{\mathcal{S}1}}x_2^{Z_{\mathcal{S}2}} \cdots x_m^{Z_{\mathcal{S}m}},
\]
which can be rewritten as $k \exp (Z_{\mathcal{S}}^T \Ln x), x \in \mathbb{R}^m_+$. 
Hence the reaction rates of the total reaction network are given by 
\[
v_j(x) = k_j \exp (Z_{\mathcal{S}_{j}}^T \Ln x), \quad j=1, \cdots,r,
\]
where $\mathcal{S}_{j}$ is the substrate complex of the $j$-th reaction with reaction constant $k_j >0$. 
This yields the following compact description of the rate vector $v(x)$. Define the $r \times c$ matrix $K$ as the matrix whose $(j,\sigma)$-th element equals $k_j$
if the $\sigma$-th complex is the substrate complex for the $j$-th reaction, and zero otherwise. Then 
$
v(x) = K \Exp (Z^T \Ln x), \,x \in \mathbb{R}^m_+,$
and the dynamics of the mass action reaction network takes the form
\begin{equation}\label{closed}
\dot{x} = ZDK\Exp (Z^T \Ln x), \quad x \in \mathbb{R}^m_+
\end{equation}
It can be easily verified that the $c \times c$ matrix $L:= - DK$ has nonnegative diagonal elements and nonpositive off-diagonal elements. Moreover, since $\mathds{1}_c^TD=0$ also $\mathds{1}_c^TL=0$, i.e., the column sums of $L$ are all zero. Hence $L$ defines a weighted {\it Laplacian matrix}\footnote{In \cite{HornJackson, Horn} (minus) this matrix is called the {\it kinetic matrix}, and is denoted by $A$.} for the graph of complexes $\mathcal{G}$.

The aim of CRN theory, starting with \cite{Feinberg2,Horn,HornJackson}, is to analyze the dynamical properties of (\ref{closed}), and in particular to derive conditions which ensure a dynamical behavior which is independent of the precise values of the reaction constants (which are often poorly known or varying). This has culminated in the {\it deficiency-zero} and {\it deficiency-one} theorems (see e.g. \cite{Feinberg1}), while a somewhat complementary approach is based on the assumption of existence of a {\it complex-balanced equilibrium} \cite{Feinberg2, Horn}, generalizing the classical notion of a detailed-balanced equilibrium.
\begin{definition}
A chemical reaction network (\ref{closed}) is called {\it complex-balanced}\footnote{Note that in older references, e.g. \cite{Horn, Feinberg2}, a reaction network (or {\it mechanism}) is called complex-balanced if there exists a complex-balanced equilibrium for {\it all} {\red positive reaction constants}.} if there exists an equilibrium $x^* \in \mathbb{R}_+^m$, called a {\it complex-balanced equilibrium}, satisfying
\begin{equation}\label{bal}
Dv(x^*) = - L\Exp (Z^T \Ln x^*) =0
\end{equation}
\end{definition}
Chemically (\ref{bal}) means that at the complex-balanced equilibrium $x^*$ not only all the chemical species, but also the complexes remain constant; i.e., for each complex the total inflow (from the other complexes) equals the total outflow (to the other complexes). 

The assumption of complex balancing has been shown to have strong implications for the dynamical properties of (\ref{closed}); see in particular the classical papers \cite{HornJackson, Horn, Feinberg2}. As detailed in \cite{rao}, expanding on \cite{vds}, these properties can be easily proved by defining the diagonal matrix
\begin{equation}\label{K}
\Xi(x^*) :=  \diag \big(\exp (Z_i^T \Ln x^*)\big)_{i=1, \cdots, c},
\end{equation}
and rewriting the dynamics (\ref{closed}) into the form
\begin{equation}\label{masterequation}
\dot{x} = - Z\mathcal{L}(x^*) \Exp (Z^T \Ln (\frac{x}{x^*})),\quad 
\mathcal{L}(x^*) := L \Xi(x^*).
\end{equation}
The key point is that since $\mathcal{L}(x^*) \Exp (Z^T \Ln \left(\frac{x}{x^*}\right)) = 0$ for $x=x^*$, and $\Exp (Z^T \Ln (\frac{x^*}{x^*})) =  \mathds{1}_c$, the transformed matrix $\mathcal{L}(x^*)$ satisfies
\begin{equation}
\mathcal{L}(x^*) \mathds{1}_c =0, \quad \mathds{1}_c^T\mathcal{L}(x^*) =0,
\end{equation}
and thus is a {\it balanced} Laplacian matrix (column sums {\it and} row sums are zero). 
Together with {\it convexity} of the exponential function this implies the following key fact.
\begin{proposition}\label{fundamental}
$\gamma^T \mathcal{L}(x^*) \Exp (\gamma) \geq 0$ for any $\gamma \in \mathbb{R}^c$, with equality if and only if $D^T \gamma =0$.
\end{proposition}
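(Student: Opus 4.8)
The plan is to exploit the sign structure of the balanced Laplacian $\mathcal{L}(x^*)$ together with the tangent-line inequality for the convex function $\exp$. Write $a_{ij} := -[\mathcal{L}(x^*)]_{ij} \ge 0$ for the off-diagonal entries ($i \ne j$); these are precisely the nonnegative weights of the edges $j \to i$ of the graph of complexes. Setting $z := \Exp(\gamma)$, so that $z_i = e^{\gamma_i}$, I would first expand the scalar $\gamma^T \mathcal{L}(x^*) z = \sum_{i,j}\gamma_i [\mathcal{L}(x^*)]_{ij} z_j$ by columns and use the column-sum condition $\mathds{1}_c^T \mathcal{L}(x^*) = 0$, i.e. $[\mathcal{L}(x^*)]_{jj} = \sum_{i \ne j} a_{ij}$, to eliminate the diagonal entries. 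This yields the edge-wise expression $\gamma^T \mathcal{L}(x^*)\Exp(\gamma) = \sum_{i \ne j} a_{ij}(\gamma_j - \gamma_i) e^{\gamma_j}$.

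Next I would apply convexity of $\exp$ in the form $e^{\gamma_i} \ge e^{\gamma_j}\bigl(1 + (\gamma_i - \gamma_j)\bigr)$, which rearranges to $(\gamma_j - \gamma_i)e^{\gamma_j} \ge e^{\gamma_j} - e^{\gamma_i} = z_j - z_i$. Since each $a_{ij} \ge 0$, summing the termwise bounds gives $\gamma^T \mathcal{L}(x^*)\Exp(\gamma) \ge \sum_{i \ne j} a_{ij}(z_j - z_i)$. The crux of the argument, and the step where the \emph{balanced} nature of $\mathcal{L}(x^*)$ is indispensable, is to show that this lower bound is exactly $0$. Grouping the first summand by $j$ and the second by $i$, the column-sum condition gives $\sum_{i \ne j} a_{ij} = [\mathcal{L}(x^*)]_{jj}$ while the row-sum condition $\mathcal{L}(x^*)\mathds{1}_c = 0$ gives $\sum_{j \ne i} a_{ij} = [\mathcal{L}(x^*)]_{ii}$; hence both summands equal $\sum_k [\mathcal{L}(x^*)]_{kk}\, z_k$ and cancel. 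This establishes $\gamma^T \mathcal{L}(x^*)\Exp(\gamma) \ge 0$.

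For the equality characterization I would invoke \emph{strict} convexity: the inequality $(\gamma_j - \gamma_i)e^{\gamma_j} \ge z_j - z_i$ is strict unless $\gamma_i = \gamma_j$. Since the lower bound is zero, $\gamma^T \mathcal{L}(x^*)\Exp(\gamma)$ is the sum of the nonnegative slack terms $a_{ij}\bigl[(\gamma_j-\gamma_i)e^{\gamma_j} - (z_j - z_i)\bigr]$, so it vanishes if and only if each slack term with $a_{ij} > 0$ vanishes, i.e. if and only if $\gamma_i = \gamma_j$ across every actual edge of the graph. Reading off the $e$-th component $(D^T\gamma)_e = \gamma_{\mathrm{head}(e)} - \gamma_{\mathrm{tail}(e)}$, this condition is precisely $D^T\gamma = 0$, completing the proof. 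The main obstacle to keep in view is the cancellation step: it fails for a merely column-balanced Laplacian, and it is exactly the additional row-sum-zero property of $\mathcal{L}(x^*)$ that forces the convexity lower bound to be zero rather than some sign-indefinite quantity.
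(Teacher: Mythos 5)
Your proof is correct and follows exactly the route the paper indicates (it only sketches the argument, attributing Proposition \ref{fundamental} to the balanced structure of $\mathcal{L}(x^*)$ together with convexity of the exponential, with details in \cite{rao,vds}): expand by columns using $\mathds{1}_c^T\mathcal{L}(x^*)=0$, bound each edge term via the tangent-line inequality for $\exp$, and cancel the resulting lower bound using $\mathcal{L}(x^*)\mathds{1}_c=0$. The equality analysis via strict convexity on the edges with $a_{ij}>0$, identified with $D^T\gamma=0$, is also exactly right.
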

\smallskip
\noindent
First property which directly follows \cite{rao} from Proposition \ref{fundamental} is the classical result \cite{HornJackson, Horn, Feinberg2} that all positive equilibria are in fact complex-balanced, and that given one complex-balanced equilibrium $x^*$ the set of {\it all} positive equilibria is given by
\begin{equation}\label{equilibria}
\mathcal{E} := \{ x^{**} \in \mathbb{R}^m_+ \mid S^T \Ln x^{**}  = S^T \Ln x^{*} \}
\end{equation}
Furthermore, using an elegant result from \cite{Feinberg1}, there exists for every initial condition $x_0 \in \mR^m_+$ a {\it unique} $x^{* *}\in \mathcal{E}$ such that $x^{**} - x_0 \in \im S$.  By using the Lyapunov function
\begin{equation}\label{Gibbs}
G(x) =x^T \mathrm{Ln}\left(\frac{x}{x^{**}}\right) + \left(x^{**} - x \right)^T \mathds{1}_m
\end{equation}
Proposition \ref{fundamental} then implies that the vector of concentrations $x(t)$ starting from $x_0$ will converge to $x^{**}$; at least if the reaction network is persistent\footnote{The reaction network is called {\it persistent} if for every $x_0 \in \mathbb{R}_+^{m}$ the $\omega$-limit set of the dynamics (\ref{closed}) does not intersect the boundary of $\bar{\mathbb{R}}_+^{m}$. It is generally believed that most reaction networks are persistent. However, up to now this {\it persistence conjecture} has been only proved in special cases (cf. \cite{Anderson}, \cite{Siegel}, \cite{Angeli2011} and the references quoted in there).}.
The chemical interpretation is that $G$ is (up to a constant) the {\it Gibbs' free energy} with gradient vector $\frac{\partial G}{\partial x}(x) = \mathrm{Ln}\left(\frac{x}{x^{**}}\right)$ being the {\it chemical potentials}. See e.g. \cite{opk,vds,Schaft2013Lyon} for further information\footnote{The form (\ref{masterequation}) also provides a useful starting point for {\it structure-preserving model reduction} \cite{vds, rao, RAO2014}.}.

\section{A graph-theoretic characterization of complex-balancing}
In this section we will expand on earlier investigations to characterize the existence of a complex-balanced equilibrium (see in particular \cite{Horn}, \cite{Feinberg2}, \cite{dickinson}, and the references quoted therein), and derive a new necessary and sufficient condition for complex balancing which can be constructively verified.

First note that by the definition of $\Ln : \mathbb{R}_+^m \to \mathbb{R}^m$ the  existence of an $x^* \in \mathbb{R}^m_+$ such that $L \Exp (Z^T\Ln x^*) =0$ (i.e., $x^*$ is a complex-balanced equilibrium) is equivalent to the existence of a vector $\mu^* \in \mathbb{R}^m$ such that
\begin{equation}
L \Exp (Z^T\mu^*)=0,
\end{equation}
or equivalently, $\Exp(Z^T\mu^*) \in \ker L$. Furthermore, note that $\Exp(Z^T\mu^*) \in \mathbb{R}^c_+$.

In case the graph $\mathcal{G}$ is connected the kernel of $L$ is $1$-dimensional, and a vector $\rho \in \bar{\mathbb{R}}^c_+$ (the closure of the positive orthant) with $\rho \in \ker L$ can be computed by what is sometimes called {\it Kirchhoff's Matrix Tree theorem}\footnote{This theorem goes back to the classical work of Kirchhoff on resistive electrical circuits \cite{Kirchhoff}; see \cite{Bollobas} for a succinct treatment. Nice accounts of the Matrix Tree theorem in the context of chemical reaction networks can be found in \cite{mirzaev, gunawardena}. In \cite{dickinson} Kirchhoff's Matrix Tree theorem is mentioned and exploited in the closely related, but different, context of investigating how far complex balancing is from detailed balancing; see the next section.}, which for our purposes can be summarized as follows. 
Denote the $(i,j)$-th cofactor of $L$ by $C_{ij}=(-1)^{i+j}M_{i,j}$, where $M_{i,j}$ is the determinant of the $(i,j)$-th minor of $L$, which is the matrix obtained from $L$ by deleting its $i$-th row and $j$-th column. Define the adjoint matrix $\mathrm{adj}(L)$ as the matrix with $(i,j)$-th element given by $C_{ji}$. It is well-known that
\begin{equation}\label{adjoint}
L \cdot \mathrm{adj}(L) = (\det{L})I_c =0
\end{equation}
Furthermore, since $\mathds{1}_c^TL=0$ the sum of the rows of $L$ is zero, and hence by the properties of the determinant function it directly follows that $C_{ij}$ does not depend on $i$; implying that $C_{ij} = \rho_j, \, j=1, \cdots, c$. Therefore by defining $\rho := (\rho_1, \cdots, \rho_c)^T$, it follows from (\ref{adjoint}) that $L\rho=0$. Furthermore, cf. \cite[Theorem 14 on p.58]{Bollobas}, $\rho_i$ is equal to {\it the sum of the products of weights of all the spanning trees of} $\mathcal{G}$ {\it directed towards} vertex $i$. In particular, it follows that $\rho_j \geq 0, j=1, \cdots,c$. In fact, $\rho \neq 0$ if and only if $\mathcal{G}$ has a spanning tree. Furthermore, since for every vertex $i$ there exists at least one spanning tree directed towards $i$ if and only if the graph is strongly connected, we may conclude that $\rho \in \mathbb{R}^c_+$ if and only if the graph is {\it strongly connected}. 
\begin{example}\label{excyclic}
Consider the cyclic reaction network
{\red \begin{center}
\begin{tabular}{c c c}
& $C_3$ & \\
& {\rotatebox[origin=c]{45}{$\xleftrightharpoons[k_3^-]{\ k_3^+ \ }$}} \ {\rotatebox[origin=c]{-45}{$\xleftrightharpoons[k_2^-]{\ k_2^+ \ }$}} & \\
& $C_1$ \ \ \ \ \ \ $\xrightleftharpoons[k_1^-]{\ k_1^+ \ }$  \ \ \ \ \ \ $C_2$ &
\end{tabular}
\end{center}}
\vspace{0.3cm}

\noindent in three (unspecified) complexes $C_1, C_2, C_3$. The Laplacian matrix is given as
\[
L = \begin{bmatrix} k_1^+ + k_3^- & - k_1^- & - k_3^+ \\
- k_1^+ & k_1^- + k_2^+ & -k_2^- \\
-k_3^- & - k_2^+ & k_3^+ + k_2^-
\end{bmatrix}
\]
By Kirchhoff's Matrix Tree theorem the corresponding vector $\rho$ satisfying $L \rho=0$ is given as
\[
\rho = \begin{bmatrix} k_2^+k_3^+ + k_1^-k_3^+ + k_1^-k_2^- \\
k_1^+k_3^+ + k_1^+k_2^- + k_2^-k_3^- \\
k_1^+k_2^+ + k_2^+k_3^- + k_1^-k_3^- 
\end{bmatrix},
\]
where each term corresponds to one of the three weighted spanning trees pointed towards the three vertices.

\end{example}
In case the graph $\mathcal{G}$ is not connected the same analysis can be performed on any of its connected components. 
\begin{remark}
The {\it existence} (not the explicit {\it construction}) of $\rho \in \mathbb{R}^c_+$ satisfying $L \rho=0$ already follows from the Perron-Frobenius theorem \cite{Horn}, \cite[Lemma V.2]{sontag}; exploiting the fact that the off-diagonal elements of $-L:=DK$ are all nonnegative\footnote{This implies that there exists a real number $\alpha$ such that $-L + \alpha I_m$ is a matrix with all elements nonnegative. Since the set of eigenvectors of $-L$ and $-L + \alpha I_m$ are the same, and moreover by $\mathds{1}^TL=0$ there cannot exist a positive eigenvector of $-L$ corresponding to a non-zero eigenvalue, the application of Perron-Frobenius to $-L + \alpha I_m$ yields the result; see \cite[Lemma V.2]{sontag} for details.}.
\end{remark}
Returning to the existence of $\mu^* \in \mathbb{R}^m$ satisfying $L \Exp (Z^T\mu^*)=0$ this implies the following.
Let $\mathcal{G}_j, \, j=1, \cdots, \ell,$ be the connected components of the graph of complexes $\mathcal{G}$. For each connected component, define the vectors $\rho^1, \cdots, \rho^{\ell}$  as above by Kirchhoff's Matrix Tree theorem (i.e., as cofactors of the corresponding diagonal sub-blocks of $L$ or as sums of products of weights along spanning trees). Define the total vector $\rho$ as the stacked column vector $\rho := \mathrm{col} ( \rho^1, \cdots, \rho^{\ell})$. 
Partition correspondingly the composition matrix $Z$ as $Z= [Z_1 \cdots Z_{\ell}]$.
Then there exists $\mu^* \in \mathbb{R}^m$ satisfying $L \Exp (Z^T\mu^*)=0$ if and only if each connected component $\mathcal{G}_j$, $j=1,\cdots,\ell,$ is strongly connected and 
\begin{equation}
\Exp (Z_j^T\mu^*) = \beta_j \rho^j, \qquad \beta_j>0.
\end{equation}
This in its turn is equivalent to strong connectedness of each connected component $\mathcal{G}_j$ and the existence of constants $\beta'_j$ such that
$
Z_j^T\mu^* = \Ln \rho^j + \beta_j' \mathds{1}, j=1, \cdots, \ell
$.
Furthermore, this is equivalent to strong connectedness of each connected component of $\mathcal{G}$, and
\begin{equation}\label{final}
\Ln \rho \in \im Z^T + \ker D^T
\end{equation}
Finally, (\ref{final}) is equivalent to
\begin{equation}\label{final1}
D^T\Ln \rho \in \im D^TZ^T = \im S^T
\end{equation}
Summarizing we have obtained
\begin{theorem}\label{Kirchhoff}
The reaction network dynamics $\dot{x} = - ZL\Exp (Z^T \Ln x)$ on the graph of complexes $\mathcal{G}$ is complex-balanced if and only if each connected component of $\mathcal{G}$ is strongly connected (or, equivalently, $\rho \in \mathbb{R}^c_+$) and (\ref{final1}) is satisfied, where the elements of the sub-vectors $\rho^j$ of $\rho$ are obtained by Kirchhoff's Matrix Tree theorem applied to $L$ for each $j$-th connected component of $\mathcal{G}$. Furthermore, for a complex-balanced reaction network a balanced Laplacian matrix\footnote{It can be easily seen that the balanced Laplacian matrices $\mathcal{L}(x^*)$ for different equilibria $x^*$ just differ from each other by a positive multiplicative constant for each connected component of $\mathcal{G}$.} $\mathcal{L}(x^*)$ defined in (\ref{masterequation}) is given as
\begin{equation}
\mathcal{L}(x^*) = L \diag (\rho_1, \cdots, \rho_c)
\end{equation}
\end{theorem}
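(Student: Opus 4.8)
The plan is to present the theorem as the terminus of a chain of equivalences, formalizing the reasoning that precedes the statement. First I would reduce complex balancing to a purely linear-algebraic kernel condition. By the elementwise definition of $\Ln : \mathbb{R}_+^m \to \mathbb{R}^m$, a complex-balanced equilibrium $x^* \in \mathbb{R}_+^m$ with $L\Exp(Z^T\Ln x^*) = 0$ exists if and only if there is some $\mu^* \in \mathbb{R}^m$ with $L\Exp(Z^T\mu^*) = 0$, that is $\Exp(Z^T\mu^*) \in \ker L$; crucially this kernel vector lies in $\mathbb{R}_+^c$, being an elementwise exponential. So the problem becomes: when does $\ker L$ contain a strictly positive vector of the form $\Exp(Z^T\mu^*)$?

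Next I would analyze $\ker L$ one connected component at a time. Since $L$ is block-diagonal with respect to the partition into connected components $\mathcal{G}_1, \ldots, \mathcal{G}_\ell$, and each block has a one-dimensional kernel spanned by the Kirchhoff vector $\rho^j$, a positive kernel vector exists exactly when every $\rho^j$ is strictly positive. By the Matrix Tree interpretation already recalled, namely that $\rho_i$ is the sum over spanning trees directed toward vertex $i$, this positivity holds on $\mathcal{G}_j$ precisely when $\mathcal{G}_j$ is strongly connected. This pins down the first half of the criterion and identifies the only admissible kernel directions as the $\rho^j$. Requiring $\Exp(Z^T\mu^*)$ to be such a vector then reads, component-wise, $\Exp(Z_j^T\mu^*) = \beta_j\rho^j$ with $\beta_j>0$, which upon taking logarithms becomes $Z_j^T\mu^* = \Ln\rho^j + \beta_j'\mathds{1}$.

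I would then collapse these $\ell$ affine solvability conditions into the single membership (\ref{final}). Stacking the component equations gives $Z^T\mu^* = \Ln\rho + b$ where $b$ ranges over vectors constant on each connected component, i.e.\ over $\ker D^T$ (which is spanned by the indicators of the components). Solvability in $\mu^*$ is thus exactly $\Ln\rho \in \im Z^T + \ker D^T$. The final reformulation as (\ref{final1}) is a short linear-algebra step: applying $D^T$ sends $\im Z^T + \ker D^T$ onto $\im D^TZ^T = \im S^T$, so (\ref{final}) implies (\ref{final1}); conversely any $a$ with $D^T\Ln\rho = D^TZ^Ta$ yields $\Ln\rho - Z^Ta \in \ker D^T$, recovering (\ref{final}).

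Finally, for the displayed form of the balanced Laplacian I would substitute the relation $\Exp(Z^T\Ln x^*) = \beta_j\rho^j$ (per component) into $\Xi(x^*)$ in (\ref{K}), so that $\mathcal{L}(x^*) = L\Xi(x^*)$ equals $L\diag(\rho)$ up to a positive scalar on each connected component; balancedness then follows from $\mathds{1}_c^TL = 0$ (zero column sums) and $L\diag(\rho)\mathds{1}_c = L\rho = 0$ (zero row sums, by Kirchhoff). I expect the main obstacle to lie not in any single algebraic step but in the bookkeeping for the disconnected case, keeping the block structure of $L$, the per-component scalars $\beta_j$, and the per-component additive constants $\beta_j'\mathds{1}$ properly aligned, together with the rigorous justification of the two positivity claims drawn from the Matrix Tree theorem, namely that $\rho^j > 0$ characterizes strong connectedness. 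It is precisely this combinatorial input that converts an abstract kernel condition into the constructively verifiable criterion asserted by the theorem.
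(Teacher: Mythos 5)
Your proposal is correct and follows essentially the same route as the paper, which establishes the theorem through exactly this chain of equivalences in the text preceding the statement: reduction to $\Exp(Z^T\mu^*)\in\ker L$, componentwise identification of the positive kernel directions with the Kirchhoff vectors $\rho^j$ (positivity $\Leftrightarrow$ strong connectedness), taking logarithms to get $\Ln\rho\in\im Z^T+\ker D^T$, and applying $D^T$ to obtain (\ref{final1}). Your closing substitution into $\Xi(x^*)$ for the form of $\mathcal{L}(x^*)$, including the per-component scalar ambiguity, likewise matches the paper's footnoted remark.
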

\begin{remark}
The above theorem is a restatement of Theorem 3C in \cite{Horn}; the main difference being that in \cite{Horn} the positive vector $\rho \in \ker L$ remains unspecified, while in our case it is explicitly given by Kirchhoff's Matrix Tree theorem.
\end{remark}
We directly obtain the following corollary stated before in \cite[Eq. (3.21)]{Horn}:
\begin{corollary}
The reaction network dynamics $\dot{x} = - ZL\Exp (Z^T \Ln x)$ is complex-balanced if and only if $\rho \in \mathbb{R}^c_+$ and 
\begin{equation}\label{complexWeg}
\rho_1^{\sigma_1} \cdot \rho_2^{\sigma_2} \cdots \cdot \rho_c^{\sigma_c} =1,
\end{equation}
for all vectors $\sigma =(\sigma_1, \sigma_2, \cdots, \sigma_c)^T \in \ker Z \cap \im D $. In particular, if $\ker Z \cap \im D =0$ ({\it zero-deficiency} \cite{Horn, FeinbergHorn1974, Feinberg1}) then $\dot{x} = - ZL\Exp (Z^T \Ln x)$ is complex-balanced.
\end{corollary}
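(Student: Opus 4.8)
The plan is to derive the corollary directly from Theorem~\ref{Kirchhoff} by translating the membership condition (\ref{final}) (equivalently (\ref{final1})) into the multiplicative Wegscheider-type condition (\ref{complexWeg}) via a clean orthogonal-complement computation. Since both characterizations retain the hypothesis $\rho \in \mathbb{R}^c_+$ (strong connectedness of every connected component), the only thing left to establish is the equivalence of the two analytic conditions on $\Ln \rho$. Note that $\rho \in \mathbb{R}^c_+$ is exactly what guarantees $\Ln \rho$ is well-defined, so this hypothesis does double duty and should be kept in front throughout.

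First I would rewrite (\ref{complexWeg}). Taking logarithms, $\rho_1^{\sigma_1} \cdots \rho_c^{\sigma_c} = 1$ is equivalent to $\sum_{i=1}^c \sigma_i \ln \rho_i = 0$, i.e. $\sigma^T \Ln \rho = 0$. Hence requiring (\ref{complexWeg}) for every $\sigma \in \ker Z \cap \im D$ is precisely the statement that $\Ln \rho$ is orthogonal to the subspace $\ker Z \cap \im D$, i.e. $\Ln \rho \in (\ker Z \cap \im D)^\perp$, all subspaces living in $\mathbb{R}^c$.

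The core step is then the linear-algebra identity $\im Z^T + \ker D^T = (\ker Z \cap \im D)^\perp$. I would prove it by dualizing: using $(\im Z^T)^\perp = \ker Z$ and $(\ker D^T)^\perp = \im D$ together with the standard rule $(A + B)^\perp = A^\perp \cap B^\perp$, one gets $(\im Z^T + \ker D^T)^\perp = \ker Z \cap \im D$. Taking orthogonal complements once more and using that a finite-dimensional subspace equals its double complement yields the claimed identity. Consequently $\Ln \rho \in \im Z^T + \ker D^T$, which is condition (\ref{final}), holds if and only if $\Ln \rho \in (\ker Z \cap \im D)^\perp$, i.e. if and only if (\ref{complexWeg}) holds for all $\sigma \in \ker Z \cap \im D$. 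Combining with Theorem~\ref{Kirchhoff} gives the stated equivalence.

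Finally, for the zero-deficiency special case, if $\ker Z \cap \im D = 0$ then there is no nonzero $\sigma$ to test, so (\ref{complexWeg}) is satisfied vacuously; together with $\rho \in \mathbb{R}^c_+$ this forces complex balancing by the main equivalence. I do not anticipate a serious obstacle: the only point demanding care is the direction of the complement identity, specifically that passing from $(\im Z^T + \ker D^T)^\perp = \ker Z \cap \im D$ back to the membership statement requires the double-complement step, valid because we work in finite dimensions, together with the bookkeeping that $\im S^T = \im D^TZ^T$ so that (\ref{final}) and (\ref{final1}) are genuinely the same condition.
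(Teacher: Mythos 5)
Your proposal is correct and follows essentially the same route as the paper: both reduce (\ref{complexWeg}) by taking logarithms to the orthogonality condition $\sigma^T \Ln \rho = 0$ and then use the identity $(\im Z^T + \ker D^T)^{\perp} = \ker Z \cap \im D$ to match it with condition (\ref{final}) from Theorem \ref{Kirchhoff}. Your extra care with the double-complement step and the vacuous zero-deficiency case is sound but does not change the argument.
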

\begin{proof}
$\Ln \rho \in \im Z^T + \ker D^T$ if and only if $\sigma^T \Ln \rho=0$ for all $\sigma \in (\im Z^T + \ker D^T)^{\perp} = \ker Z \cap \im D$, or equivalently
\[
0=\sigma_1 \ln \rho_1 + \cdots + \sigma_c \ln \rho_c = \ln \rho_1^{\sigma_1} + \cdots +  \ln \rho_c^{\sigma_c} =
\ln (\rho_1^{\sigma_1} \cdots \rho_c^{\sigma_c})
\]
for all $\sigma \in \ker Z \cap \im D$. 
\end{proof}
\begin{example}\label{excyclic1}
Consider Example \ref{excyclic} for the special case $k_1^- = k_2^- = k_3^-=0$ (irreversible reactions). Then the vector $\rho$ reduces to
\[
\rho = \begin{bmatrix} 
k_2^+k_3^+  \\
k_1^+k_3^+  \\
k_1^+k_2^+ 
\end{bmatrix}
\]
The reaction network with complex composition matrix $Z$ is complex-balanced if and only $k_i^+>0, i=1,2,3,$ and
\[
D^T \Ln \rho \in \im D^TZ^T, \quad D= \begin{bmatrix} -1 & 0 & 1 \\ 1 & -1 & 0 \\ 0 & 1 & -1 \end{bmatrix},
\]
This last condition can be further written out as
\[
\begin{bmatrix}
\ln \frac{k_1^+}{k_2^+} \\
\ln \frac{k_2^+}{k_3^+} \\
\ln \frac{k_3^+}{k_1^+} 
\end{bmatrix}
\in \im D^TZ^T
\]
As a mathematical example, take the complex composition matrix $Z= \begin{bmatrix} 1 & 0 & 2 \\ 1 & 1 & 1 \end{bmatrix}$ (corresponding to the complexes $X_1 + X_2, X_2, 2X_1 + X_2$). In this case the network is complex-balanced if and only if $k_1^+>0, k_2^+>0, k_3^+>0,$ and $(k_1^+)^2 = k_2^+ k_3^+$.
\end{example}

\section{Relation with the Wegscheider conditions and detailed balancing}
In this section {\red we} will relate the conditions for complex balancing as obtained in the previous section to 'Wegscheider-type conditions'. This will also relate complex balancing to the classical concept of 'detailed balancing'.

Throughout this section we will consider {\it reversible} chemical reaction networks, in which case the edges of $\mathcal{G}$ come in pairs: if there is a directed edge from vertex $i$ to $j$ then there also is a directed edge from $j$ to $i$ (and in the case of multiple edges from $i$ to $j$ there are as many edges from $i$ to $j$ as edges from $j$ to $i$). This means that the connected components of $\mathcal{G}$ are always strongly connected, or equivalently, that $\rho$ as obtained from Kirchhoff's Matrix Tree theorem is in $\mathbb{R}^c_+$.

Define the {\it undirected} graph $\bar{\mathcal{G}}$ as having the same vertices as $\mathcal{G}$ but half its number of edges, by replacing every pair of oppositely directed edges of $\mathcal{G}$ by one undirected edge of $\bar{\mathcal{G}}$. Denote the number of edges of $\bar{\mathcal{G}}$ by $\bar{r} = \frac{1}{2}r$. Endow subsequently $\bar{\mathcal{G}}$ with an arbitrary orientation (all results in the sequel will be independent of this orientation), and denote the resulting incidence matrix by $\bar{D}$. Clearly, after possible reordering of the edges, $\bar{D}$ is related to the incidence matrix $D$ of $\mathcal{G}$ as
\begin{equation}\label{DD}
D = \begin{bmatrix} \bar{D} & - \bar{D} \end{bmatrix}
\end{equation}
To the $j$-th edge of $\bar{\mathcal{G}}$ there now correspond two reaction constants $k_{j}^+, k_{j}^-$ (the forward and reverse reaction constants with respect to the chosen orientation of $\bar{\mathcal{G}}$).
Then define the {\it equilibrium constants} $K^{\mathrm{eq}}_j := \frac{k_{j}^+}{k_{j}^-}, j=1, \cdots, \bar{r}$, and the vector $K^{\mathrm{eq}} := (K^{\mathrm{eq}}_1, \cdots, K^{\mathrm{eq}}_{\bar{r}})^T$.

Recall \cite{FeinbergWeg, Schuster} that the reaction network is called {\it detailed-balanced}\footnote{This means, see e.g. \cite{opk,HornJackson,vds}, that there exists an equilibrium for which every forward reaction is balanced by its reverse reaction.} if and only if it satisfies
\begin{equation}\label{detbal}
\Ln K^{\mathrm{eq}} \in \im \bar{S}^T,
\end{equation}
where $\bar{S}:=Z\bar{D}$ is the stoichiometric matrix of the reversible network with graph $\bar{\mathcal{G}}$. This is equivalent to
\[
\sigma_1 {\red \ln} K^{\mathrm{eq}}_1 + \cdots + \sigma_{\bar{r}} {\red \ln} K^{\mathrm{eq}}_{\bar{r}} =0
\]
for all $\sigma=(\sigma_1, \cdots, \sigma_{\bar{r}})$ such that $\sigma^T \bar{S}^T=0$. Writing out 
$K^{\mathrm{eq}}_j = \frac{k_{j}^+}{k_{j}^-}$ this is seen to be equivalent to
\begin{equation}\label{Weg}
(k_1^+)^{\sigma_1} \cdots (k_{\bar{r}}^+)^{\sigma_{\bar{r}}} = (k_1^-)^{\sigma_1} \cdots (k_{\bar{r}}^-)^{\sigma_{\bar{r}}}
\end{equation}
for all $\sigma$ such that $\bar{S}\sigma=0$, known as the (generalized) {\it Wegscheider conditions}.

Recently in \cite{dickinson} the notion of {\it formally balanced} was introduced, based on Feinberg's circuit conditions in \cite{FeinbergWeg}, and weakening the above Wegscheider conditions. In our set-up this notion is defined as follows.
\begin{definition}
The reversible reaction network $\bar{\mathcal{G}}$ with incidence matrix $\bar{D}$, and vector of equilibrium constants $K^{\mathrm{eq}}$ is called {\it formally balanced} if 
\[
\Ln K^{\mathrm{eq}} \in \im \bar{D}^T
\]
\end{definition}
Since $\bar{S}=Z \bar{D}$ 'formally balanced' is trivially implied by 'detailed-balanced', while if $\im \bar{S}^{\red T} = \im \bar{D}^{T}$ ({\it zero-deficiency}) the reverse holds. Furthermore, any reaction network with acyclic $\bar{\mathcal{G}}$ (and thus $\ker \bar{D} =0$) is automatically formally balanced. 

As above, the notion of 'formally balanced' is seen to be equivalent to
\[
\sigma_1 {\red \ln}K^{\mathrm{eq}}_1 + \cdots + \sigma_r {\red \ln}K^{\mathrm{eq}}_{\bar{r}} =0
\]
for all $\sigma=(\sigma_1, \cdots, \sigma_{\bar{r}})^T$ such that $\sigma^T \bar{D}^T=0$, which in turn is equivalent {\red to} 
\begin{equation}\label{weakWeg}
(k_1^+)^{\sigma_1} \cdots (k_{\bar{r}}^+)^{\sigma_{\bar{r}}} = (k_1^-)^{\sigma_1} \cdots (k_{\bar{r}}^-)^{\sigma_{\bar{r}}}
\end{equation}
for all $\sigma$ such that $\bar{D} \sigma=0$ (that is, for all cycles $\sigma$). We will refer to (\ref{weakWeg}) as the {\it weak Wegscheider conditions}. Note that the weak Wegscheider conditions only depend on the structure of the graph $\bar{\mathcal{G}}$ (i.e., its cycles) and the equilibrium constants, and {\it not} on the complex composition matrix $Z$ as in the case of the 'strong' Wegscheider conditions (\ref{Weg}). 

\begin{theorem}\label{formal}
Consider a reversible chemical reaction network given by the graph $\bar{\mathcal{G}}$ with incidence matrix $\bar{D}$, and with $\rho$ determined by $L$. The following statements are equivalent
\begin{enumerate}
\item
$L \diag(\rho_1, \cdots, \rho_c)$ is symmetric
\item
\smallskip
$
\Ln (K^{\mathrm{eq}}) = \bar{D}^T \Ln \rho
$
\item
\smallskip
$
\Ln (K^{\mathrm{eq}}) \in \im \bar{D}^T$ \quad (formally balanced)
\end{enumerate}
\end{theorem}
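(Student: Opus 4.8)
The plan is to use the reversible structure $D=\begin{bmatrix}\bar{D} & -\bar{D}\end{bmatrix}$ of (\ref{DD}) to push both the symmetry statement and the balancing statements onto a single vector indexed by the $\bar{r}$ undirected edges. Writing $K=\begin{bmatrix}K^+\\ K^-\end{bmatrix}$ for the corresponding splitting of $K$ into its forward and reverse blocks (the $a$-th row of $K^+$ carries $k_a^+$ in the column of the tail $t_a$ of edge $a$, that of $K^-$ carries $k_a^-$ in the column of the head $h_a$), a short computation gives $L=-DK=\bar{D}(K^- - K^+)$. I would then introduce the edge vectors $u:=(K^- - K^+)\rho\in\mathbb{R}^{\bar{r}}$, with $u_a=k_a^-\rho_{h_a}-k_a^+\rho_{t_a}$, and $\xi:=\bar{D}^T\Ln\rho-\Ln K^{\mathrm{eq}}\in\mathbb{R}^{\bar{r}}$, with $\xi_a=\ln(k_a^-\rho_{h_a})-\ln(k_a^+\rho_{t_a})$. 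The identity that drives the whole proof is unconditional: since $\rho\in\ker L$ by Theorem \ref{Kirchhoff}, we get $\bar{D}u=L\rho=0$, so that $u\in\ker\bar{D}$ no matter what the rate constants are.

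For $1\Leftrightarrow 2$ I would compute $\mathcal{L}(x^*)=L\,\diag(\rho_1,\dots,\rho_c)$ entrywise. For the edge $a$ the only off-diagonal entries involved are $\mathcal{L}_{h_a,t_a}=-k_a^+\rho_{t_a}$ and $\mathcal{L}_{t_a,h_a}=-k_a^-\rho_{h_a}$, so (for a simple graph $\bar{\mathcal{G}}$) symmetry of $L\,\diag(\rho_1,\dots,\rho_c)$ is equivalent to $k_a^+\rho_{t_a}=k_a^-\rho_{h_a}$ for every edge, i.e.\ to $u=0$. Because $u_a$ and $\xi_a$ vanish together, this is exactly $\Ln K^{\mathrm{eq}}=\bar{D}^T\Ln\rho$, statement 2. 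The implication $2\Rightarrow 3$ is immediate since $\bar{D}^T\Ln\rho\in\im\bar{D}^T$.

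The substance of the theorem is $3\Rightarrow 2$. Formal balancing gives $\Ln K^{\mathrm{eq}}\in\im\bar{D}^T$, and together with $\bar{D}^T\Ln\rho\in\im\bar{D}^T$ this yields $\xi\in\im\bar{D}^T$. Since $\im\bar{D}^T=(\ker\bar{D})^{\perp}$ and $u\in\ker\bar{D}$, we obtain the orthogonality relation $\xi^Tu=0$. On the other hand, setting $\phi_a:=\ln(k_a^+\rho_{t_a})$ and $\psi_a:=\ln(k_a^-\rho_{h_a})$, every summand satisfies $\xi_au_a=(\psi_a-\phi_a)(e^{\psi_a}-e^{\phi_a})\ge 0$ by monotonicity of the exponential, with equality if and only if $\psi_a=\phi_a$, i.e.\ $\xi_a=0$. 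A sum of nonnegative terms equal to zero forces every term to vanish, hence $\xi=0$, which is statement 2.

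I expect the orthogonality step to be the crux. Everything turns on recognising that the Kirchhoff vector supplies the purely graph-theoretic fact $u\in\ker\bar{D}$, which then pairs with the hypothesis $\xi\in\im\bar{D}^T$ through the complementary-subspace relation $\im\bar{D}^T=(\ker\bar{D})^{\perp}$; the elementwise sign estimate $\xi_au_a\ge 0$ --- a scalar echo of the convexity already exploited in Proposition \ref{fundamental} --- is what upgrades $\xi^Tu=0$ from mere orthogonality to $\xi=0$. The only point needing extra care is parallel edges of $\bar{\mathcal{G}}$: there statement 1 must be read edge-by-edge rather than on the aggregated entries of $L$ (equivalently one assumes $\bar{\mathcal{G}}$ simple), but the vectors $u$ and $\xi$ are defined per edge, so the decisive equivalence $2\Leftrightarrow 3$ is unaffected.
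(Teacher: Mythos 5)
Your proof is correct, and its interesting part --- the implication $(3)\Rightarrow(2)$ --- takes a genuinely different route from the paper's. The paper argues combinatorially: for each edge it looks at the spanning trees directed towards the tail $t$, reverses either the edge itself or the tree-path from $t$ to $h$ (invoking the cycle identity (\ref{comb1}) supplied by formal balancing), and sums the resulting correspondence over all spanning trees to obtain $\frac{k^+}{k^-}\rho_t=\rho_h$ edge by edge. You instead isolate a single unconditional graph-theoretic fact, $u=(K^--K^+)\rho\in\ker\bar{D}$ (a repackaging of $L\rho=0$ from the Matrix Tree theorem, using $L=\bar{D}(K^--K^+)$), pair it against $\xi=\bar{D}^T\Ln\rho-\Ln K^{\mathrm{eq}}\in\im\bar{D}^T=(\ker\bar{D})^{\perp}$, and upgrade the orthogonality $\xi^Tu=0$ to $\xi=0$ through the termwise inequality $(\psi_a-\phi_a)(e^{\psi_a}-e^{\phi_a})\ge 0$, whose equality case uses $\rho\in\mR^c_+$ (guaranteed here by reversibility). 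Your argument is shorter, needs no enumeration of spanning trees, and reuses exactly the monotonicity-of-$\exp$ mechanism behind Proposition \ref{fundamental}, which makes the kinship between formal and complex balancing more visible; what it does not deliver is the tree-level identity $\frac{k^+}{k^-}\tau_t=\tau_h$, which the paper's proof exposes and which explains \emph{why} the ratio $\rho_h/\rho_t$ reproduces $K^{\mathrm{eq}}$ term by term. Your caveat about parallel edges in $(1)\Leftrightarrow(2)$ is well taken --- the entries of $L\diag(\rho_1,\cdots,\rho_c)$ only see sums over parallel edges --- but the paper's own proof of that equivalence carries the same implicit simple-graph reading, and, as you observe, the decisive equivalence $(2)\Leftrightarrow(3)$ is defined per edge and is unaffected.
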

\begin{proof}
$(1) \Leftrightarrow (2)$\\
Let us first prove the equivalence between ($1$) and ($2$). Consider the $i$-th and $j$-th vertex of $\bar{\mathcal{G}}$, and suppose that the orientation has been taken such that the $\alpha$-th edge between $i$ and $j$ is such that $i$ is the tail vertex and $j$ is the head vertex. Then the $(i,j)$-th element of $L \diag(\rho_1, \cdots, \rho_c)$ is given by $k_{\alpha}^- \rho_j$, while the $(j,i)$-th element equals $k_{\alpha}^+ \rho_i$. Symmetry of $L \diag(\rho_1, \cdots, \rho_c)$ thus amounts to
\[
k_{\alpha}^- \rho_j = k_{\alpha}^+ \rho_i
\]
for all pairs of vertices $i,j$.
On the other hand, 
the $\alpha$-th element of the vector $\bar{D}^T \Ln \rho$ is given by
\[
\ln \rho_j - \ln \rho_i
\]
while the $\alpha$-th element of $\Ln (K^{\mathrm{eq}})$ is given by
\[
\ln \frac{k_{\alpha}^+}{k_{\alpha}^-} = \ln k_{\alpha}^+ - \ln k_{\alpha}^-
\]
Equality of $\ln \rho_j - \ln \rho_i$ and $\ln k_{\alpha}^+ - \ln k_{\alpha}^-$ is thus equivalent to
\[
 \ln  \rho_j + \ln k_{\alpha}^- = \ln  \rho_i + \ln k_{\alpha}^+
\]
which in its turn is equivalent to $k_{\alpha}^+ \rho_i = k_{\alpha}^- \rho_j$ as above.\\
$(2) \Leftrightarrow (3)$\\
Obviously ${\red (2)}$ implies ${\red (3)}$. For the reverse implication, consider any pair of vertices linked by an edge of the graph $\bar{\mathcal{G}}$. Depending on the orientation of $\bar{\mathcal{G}}$ refer to one vertex as the tail vertex $t$ and the other vertex as the head vertex $h$. Refer to the positive reaction constant from $t$ to $h$ by $k^+$ and to the negative reaction constant by $k^-$. Now consider a spanning tree directed towards $t$ with product of weights denoted by $\tau_t$. In case the edge between $t$ and $h$ is part of this spanning tree then it follows that by reversing the orientation of this edge it defines a spanning tree directed towards to $h$ with product of weights denoted by $\tau_h$. It follows directly that
\begin{equation}\label{comb}
\frac{k^+}{k^-} \tau_t= \tau_h
\end{equation}
In case the edge between $t$ and $h$ is {\it not} part of this spanning tree then divide the edges of the spanning tree into two sets; the set $E_1$ containing the edges of the part of the spanning tree from $t$ to $h$ (containing say $\ell$ edges) and the set $E_2$ containing the remaining edges of the spanning tree. Observe that $E_1$ {\it together} with the edge from $t$ to $h$ forms a cycle of the graph $\bar{\mathcal{G}}$. Since 
$\Ln (K^{\mathrm{eq}}) \in \im \bar{D}^T$ it follows that for the reaction constants along this cycle (choosing an appropriate orientation),
\begin{equation}\label{comb1}
k^+ \cdot k_1^+ \cdots k_{\ell}^+ = k^- \cdot k_1^- \cdots k_{\ell}^-.
\end{equation}
Now within the spanning tree directed towards $t$, if the orientation of each of the edges of $E_1$ is reversed, we obtain another spanning tree directed towards $h$ with product of weights denoted again by $\tau_h$. By using (\ref{comb1}) it is readily verified that also in this case we obtain the same relation (\ref{comb}). Summing up over all spanning trees we thus obtain the equality
\begin{equation}\label{comb2}
\frac{k^+}{k^-} {\red \rho}_t= {\red \rho}_h,
\end{equation}
which can be equivalently written as
$\ln \frac{k^+}{k^-} = \ln {\red \rho}_h - \ln {\red \rho}_t$. Doing this for all adjacent vertices $t$ and $h$ this exactly amounts to the required equality $\Ln (K^{\mathrm{eq}}) = \bar{D}^T \Ln \rho$.
\end{proof}

\begin{example}
Consider again the reaction network described in Example \ref{excyclic} (without specifying the complexes $C_1, C_2, C_3$). The transformed Laplacian matrix is computed as
\[
\begin{array}{rcl}
\mathcal{L} = \left[\begin{smallmatrix} k_1^+ + k_3^- & - k_1^- & - k_3^+ \\
- k_1^+ & k_1^- + k_2^+ & -k_2^- \\
-k_3^- & - k_2^+ & k_2^- + k_3^+
\end{smallmatrix}\right]\left[
\begin{smallmatrix} k_2^+k_3^+ + k_1^-k_3^+ + k_1^-k_2^- & 0 & 0 \\
0 & k_1^+k_3^+ + k_1^+k_2^- + k_2^-k_3^- & 0 \\
0 & 0 & k_1^+k_2^+ + k_2^+k_3^- + k_1^-k_3^- 
\end{smallmatrix}\right] = \\[10mm]
\left[\begin{smallmatrix}
(k_1^+ +k_2^+)(k_2^+k_3^+ + k_1^-k_3^+ + k_1^-k_2^-) & -k_1^-(k_1^+k_3^+ + k_1^+k_2^- + k_2^-k_3^-) & -k_3^+(k_1^+k_2^+ + k_2^+k_3^- + k_1^-k_3^-) \\
-k_1^+(k_2^+k_3^+ + k_1^-k_3^+ + k_1^-k_2^-) & (k_1^- + k_2^+)(k_1^+k_3^+ + k_1^+k_2^- + k_2^-k_3^-) & -k_2^-(k_1^+k_2^+ + k_2^+k_3^- + k_1^-k_3^-) \\
-k_3^-(k_2^+k_3^+ + k_1^-k_3^+ + k_1^-k_2^-) & -k_2^+(k_1^+k_3^+ + k_1^+k_2^- + k_2^-k_3^-) & (k_2^- + k_3^+)(k_1^+k_2^+ + k_2^+k_3^- + k_1^-k_3^-) 
\end{smallmatrix}\right]
\end{array}
\]
which is symmetric if and only if
\begin{equation}\label{formalex}
k_1^+k_2^+k_3^+ = k_1^-k_2^-k_3^-
\end{equation}
On the other hand, $\Ln K^{\mathrm{eq}} \in \bar{D}^T$ amounts to
\[
\begin{bmatrix} \ln \frac{k_1^+}{k_1^-} \\ \ln \frac{k_2^+}{k_2^-} \\ \ln \frac{k_3^+}{k_3^-} \end{bmatrix} \in
\im \begin{bmatrix}-1 & 0 & 1 \\ 1 & -1 & 0 \\ 0 & 1 & -1 \end{bmatrix}
\]
which reduces to $\ln \frac{k_1^+}{k_1^-} + \ln \frac{k_2^+}{k_2^-} + \ln \frac{k_3^+}{k_3^-} =0$, and hence to the same condition (\ref{formalex}). Thus the reaction network is formally balanced if and only if (\ref{formalex}) holds.
\end{example}

\smallskip
Now let us relate all this to the necessary and sufficient conditions for complex balancing obtained before, cf. (\ref{final1}). Note that by (\ref{DD})
\[
D^T \Ln \rho \in \im D^TZ^T \Leftrightarrow \bar{D}^T \Ln \rho \in \im \bar{D}^TZ^T
\]
Hence a reversible reaction network is complex-balanced if and only if $\bar{D}^T \Ln \rho \in \im \bar{D}^TZ^T = \im \bar{S}^T$.

We directly obtain the following corollary proved by other methods in \cite{dickinson}\footnote{The result is formulated in \cite{dickinson} as the equivalence between 'complex-balanced' and 'detailed-balanced' under the assumption of 'formally balanced'.}.
\begin{corollary}\label{cor}
A reversible reaction network is detailed-balanced if and only if it is formally balanced as well as complex-balanced. 
\end{corollary}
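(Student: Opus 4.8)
The plan is to read off all three notions as membership or equality conditions involving the single vector $\Ln \rho$, and then to observe that Theorem \ref{formal} makes two of these conditions literally identical. At this point we have exactly three characterizations at our disposal: the network is \emph{detailed-balanced} iff $\Ln K^{\mathrm{eq}} \in \im \bar{S}^T$; by the remark immediately preceding the corollary it is \emph{complex-balanced} iff $\bar{D}^T \Ln \rho \in \im \bar{S}^T$; and by the equivalence $(2)\Leftrightarrow(3)$ of Theorem \ref{formal} it is \emph{formally balanced} iff $\Ln K^{\mathrm{eq}} = \bar{D}^T \Ln \rho$. The crucial observation is that this equivalence upgrades the bare membership ``$\Ln K^{\mathrm{eq}} \in \im \bar{D}^T$'' into the explicit identity $\Ln K^{\mathrm{eq}} = \bar{D}^T \Ln \rho$, and it is this identity that lets the detailed-balanced and complex-balanced conditions collapse onto each other.

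For the forward implication, I would suppose the network is detailed-balanced, i.e. $\Ln K^{\mathrm{eq}} \in \im \bar{S}^T$. Since $\bar{S} = Z\bar{D}$ gives $\im \bar{S}^T = \im \bar{D}^T Z^T \subseteq \im \bar{D}^T$, the network is immediately formally balanced. Theorem \ref{formal} then supplies $\Ln K^{\mathrm{eq}} = \bar{D}^T \Ln \rho$, whence $\bar{D}^T \Ln \rho = \Ln K^{\mathrm{eq}} \in \im \bar{S}^T$, which is precisely the complex-balanced condition.

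For the reverse implication, I would suppose the network is both formally balanced and complex-balanced. Formal balancing gives, via Theorem \ref{formal}, the identity $\Ln K^{\mathrm{eq}} = \bar{D}^T \Ln \rho$, while complex balancing reads $\bar{D}^T \Ln \rho \in \im \bar{S}^T$. Substituting the former into the latter yields $\Ln K^{\mathrm{eq}} \in \im \bar{S}^T$, i.e. detailed balancing.

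I expect no genuine obstacle at the level of the corollary itself: once the three notions are phrased through $\Ln \rho$, each direction is a one-line substitution. All the real content has already been discharged in Theorem \ref{formal}; in particular it is the nontrivial implication $(3)\Rightarrow(2)$ there---that formal balancing forces the \emph{specific} equality $\Ln K^{\mathrm{eq}} = \bar{D}^T \Ln \rho$ rather than the mere membership $\Ln K^{\mathrm{eq}} \in \im \bar{D}^T$ (so that $\Ln K^{\mathrm{eq}}$ and $\bar{D}^T \Ln \rho$ actually coincide)---that drives the collapse of the detailed- and complex-balanced conditions onto one another. The only point to take care with is therefore to invoke Theorem \ref{formal} in its sharp form and not merely the definition of formal balancing.
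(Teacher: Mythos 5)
Your proof is correct and follows essentially the same route as the paper: both directions reduce to the chain $\Ln K^{\mathrm{eq}} = \bar{D}^T \Ln \rho \in \im \bar{S}^T$ via Theorem \ref{formal} and the complex-balancing criterion $\bar{D}^T \Ln \rho \in \im \bar{S}^T$. The only cosmetic difference is that for the forward direction the paper simply cites the previously noted facts that detailed balancing implies both other properties, whereas you rederive the implication to complex balancing explicitly through Theorem \ref{formal}; the substance is the same.
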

\begin{proof}
We have seen before that 'detailed-balanced' implies 'complex-balanced' as well as 'formally balanced'. 
For the converse we note that formally balanced implies that $\Ln (K^{\mathrm{eq}}) \in \im \bar{D}^T$. Hence by Theorem \ref{formal} $\Ln (K^{\mathrm{eq}}) = \bar{D}^T \Ln \rho$. Since furthermore the network is complex-balanced $\bar{D}^T \Ln \rho \in \im \bar{S}^T$. Hence $\Ln (K^{\mathrm{eq}}) = \bar{D}^T \Ln \rho \in \im \bar{S}^T$, i.e., the reaction network is detailed-balanced.
\end{proof}
In case the reversible reaction network is formally balanced the symmetric matrix $\mathcal{L}(x^*)= L \diag (\rho_1, \cdots, \rho_c)$ can be written as
\[
L \diag (\rho_1, \cdots, \rho_c) = \bar{D}\mathcal{K}\bar{D}^T
\]
where $\mathcal{K}$ is the $\bar{r} \times \bar{r}$ diagonal matrix, with $\alpha$-th diagonal element given by $\kappa_{\alpha}:=k_{\alpha}^+ \rho_j = k_{\alpha}^- \rho_i$ where the $\alpha$-th edge of $\bar{\mathcal{G}}$ corresponds to the reversible reaction between the $i$-th and the $j$-th complex. For the interpretation of the positive constants $\kappa_{\alpha}$ as {\it conductances} of the reversible reactions please refer to \cite{Ederer2007, Schaft2013Lyon}.

If additionally the formally balanced reaction network is {\it complex-balanced} (and thus, cf. Corollary \ref{cor}, detailed-balanced), then its dynamics thus takes the form
\begin{equation*}
\dot{x} = - Z \bar{D}\mathcal{K}\bar{D}^T \Exp (Z^T \Ln \left(\frac{x}{x^*}\right))
\end{equation*}
In this case, see e.g. \cite{HornJackson,vds}, all equilibria $x^{**}$ are in fact detailed-balanced equilibria, that is, $\bar{D}^T Z^T \Ln x^{**} = \Ln (K^{\mathrm{eq}}) \, (= \bar{D}^T \Ln \rho )$.

\section{Conclusions}
By a systematic use of notions from algebraic graph theory, in particular the Laplacian matrix and Kirchhoff's Matrix Tree theorem, previously derived results on complex, detailed and formal balancing have been proved in a simple manner. Furthermore, it has resulted in a new necessary and sufficient condition for complex balancing, which can be verified constructively. 

The results obtained in this note can be immediately extended to mass action kinetic reaction networks with constant inflows and mass action outflow exploiting the classical idea of adding a 'zero complex'; see \cite{FeinbergHorn1974} and \cite{vdsIJC} for further details. 

Current research is concerned with the application of the developed framework to questions of occurrence of multi-stability and structure-preserving model reduction; see for the latter also \cite{vds,rao,RAO2014}.

\section*{Compliance with Ethical Standards}
Conflict of Interest: The authors declare that they have no conflict of interest.

\end{document}